\newcommand{\INPUT}{\REQUIRE}  
\newcommand{\OUTPUT}{\ENSURE}
\newcommand{\prob}{\mathbb{P}}
\newcommand{\bern}{\operatorname{Bern}}
\newcommand{\ind}{{\bf 1}}
\newcommand{\vecBeta}{\mathbf{\beta}}
\newcommand{\vecB}{B}
\newcommand{\subs}{\Omega_{\operatorname{subs}}}
\newcommand{\rc}{\Omega_{\operatorname{rc}}}
\newcommand{\pispins}{\pi_{\operatorname{spins}}}
\newcommand{\pisubs}{\pi_{\operatorname{subs}}}
\newcommand{\pirc}{\pi_{\operatorname{rc}}}
\newcommand{\wsubs}{w_{\operatorname{subs}}}
\newcommand{\Zrc}{Z_{\operatorname{rc}}}
\newcommand{\degree}{{\operatorname{deg}}}
\newcommand{\calI}{{\cal I}}
\newcommand{\lame}[1]{\lambda_{e \rightarrow #1}}
\newcommand{\cluster}{{\cal C}}
\newcommand{\on}[1]{{\operatorname{#1}}}
\newtheorem{thm}{Theorem}
\begin{document}

\title{Simulation reductions for the Ising model}

\maketitle

{\bfseries \sffamily Mark L. Huber} \par
{\slshape Department of Mathematics and Computer Science, 
  Claremont McKenna College} \par
{\slshape mhuber@cmc.edu}
\vskip 1em

\abstract{Polynomial time reductions between problems have long been
used to delineate problem classes.  Simulation reductions also exist,
where an oracle for simulation from some probability distribution
can be employed 
together with an oracle for Bernoulli draws in order to obtain
a draw from a different distribution.  Here linear time
simulation reductions are given
for: the Ising spins world to the Ising subgraphs world and
the Ising subgraphs world to the Ising spins world.
This answers a long standing question of whether such a direct relationship
between these two versions of the Ising model existed.  Moreover,
these reductions result in the first method
for perfect simulation from the subgraphs world and a new Swendsen-Wang style
Markov chain for the Ising model.
The method used is to write the desired distribution with
set parameters as a mixture of distributions where the 
parameters are at their extreme values.}

\section{Introduction}

In this paper, two different forms of the Ising model are linked via
a polynomial time simulation reduction.  Consider a family of probability 
distributions $\pi$ parametrized by inputs $\calI$, together with 
another family $\pi'$.
Say that $\pi$ is 
simulation reducible to $\pi'$ if an algorithm exists for drawing from
$\pi$ on $\calI$ that is allowed to take draws from $\pi'$ as well
as utilize extra randomness in the form of independent Bernoulli draws whose
parameter is determined at runtime.  In other words, if the ability to draw from
$\pi'$ together with the ability to flip coins
with arbitrary probabilities of coming up heads is enough to draw a sample
from $\pi$, then $\pi$ is simulation reducible to $\pi'$.  
As with problem reductions, 
sampling reductions are of most interest when they are polynomial, that is,
when the sum of the sizes of the inputs to $\pi'$ plus the number
of Bernoullis used is a polynomial in the size of $\calI$.

A famous example of simulation reductions appears in the Swendsen-Wang
approach~\cite{swendsenw1987} to the Ising model.  
The original version of the Ising model
used spins on nodes in graphs (call this the \emph{spins} view) while
another used subsets of edges in graphs (call this the \emph{random cluster}
view).  As part of the Swendsen-Wang algorithm, a draw from the random cluster
model could be turned (using a number of Bernoullis equal to the number of
clusters formed by the edges) into a spins draw.  Similarly, a spins draw
could be turned (using a number of Bernoullis equal to the number of edges
in the graph) into a random cluster draw.

In this work, a third view of the Ising model is dealt with.  This third
view also uses subsets of edges, but assigns them weights that are very
different from those of the random cluster model.  
Following~\cite{jerrums1993}, this third model will be called the
\emph{subgraphs} view of the Ising model.

The families of distributions corresponding to spins, random clusters,
and subgraphs will be denoted $\pispins$, $\pirc$, and $\pisubs$ respectively.
The remainder of the paper is organized as follows.  In 
Section~\ref{SEC:Ising}, the three families of the Ising model are described
in detail.  Section~\ref{SEC:theory} discusses the framework that will be
used for the reductions.  Section~\ref{SEC:subs2rc} shows how a single 
draw from the subgraphs model together with a number of Bernoullis at
most equal to the number of edges can be used to create a 
draw from the random cluster model.  Section~\ref{SEC:rc2subs} then shows
the reverse direction:  how a single draw from the random cluster model
together with a number of Bernoullis equal to the number of edges can
be used to draw from the subgraphs model.  Since the reductions between
$\pirc$ and $\pispins$ are well known, and require a single
sample each, it is possible to take a single draw from $\pisubs$ and
convert it to a single draw from $\pispins$ with a small number of 
Bernoullis.  Similarly, a single draw from $\pispins$ can be converted
to a single draw from $\pisubs$ with a small number of Bernoullis.  This
yields a new Swendsen-Wang style Markov chain for approximately sampling
from the Ising model.
Also, it
answers a long standing question in~\cite{jerrums1993} of whether or not
such a direct relationship between the distributions $\pispins$ and 
$\pisubs$ existed.  These results are discussed in detail in 
Section~\ref{SEC:discuss}.

Earlier work in \cite{randallw1999} created an approximate simulation 
reduction from $\pispins$ to $\pisubs$ by creating a series of 
approximations of partition functions for multiple inputs.  This method has
the drawback of requiring multiple draws from $\pisubs$ in order to 
obtain a single configuration that is approximately drawn from $\pispins$.  
In contrast, the reduction presented here is exact, 
and requires only one draw from
$\pisubs$ to generate one draw from $\pispins$ or vice versa.  
Because the reduction is exact, 
this provides for the first perfect simulation algorithm for $\pisubs$,
the details are also
in Section~\ref{SEC:discuss}.

\section{The Ising model}
\label{SEC:Ising}
The Ising model was initially proposed as a model for 
magnitization and has been studied extensively becuase of the presence 
of a phase transition in lattices of dimension two or higher~\cite{simon1993}.  
It has also been employed in statistical applications including agricultural
studies and image restoration~\cite{besag1974}.

\paragraph{Spins model}
In the ferromagnetic Ising model 
each node of a graph $G = (V,E)$ is assigned
either a 1 or -1.  Because of the original use as a model of 
magnetism, the 1 nodes are
referred to as ``spin up'' and the -1 nodes are called ``spin down''.

A state $x \in \{-1,1\}^V$ is called a \emph{configuration}.  The
weight of a configuration is of the form:
\[
w_\on{spins}(x) = \prod_{\{i,j\} \in E} f(x(i),x(j)) 
\]
For a weight function $w_\on{spins}(x)$, the probability distribution is 
$\pi_\on{spins}(x) = w_\on{spins}(x) / Z_\on{spins}$, 
where $Z_\on{spins} = \sum_{x \in \{-1,1\}^V} w_\on{spins}(x)$ 
is called the \emph{partition function}.

Let 
$\vecBeta$ be a nonnegative vector over the edges $E$, and 
$\vecB$ be a nonnegative vector over the nodes $V$.  Then 
\begin{equation}
\label{EQN:f_definition}
f(x(i),x(j)) = \left\{ \begin{array}{ll} 
  \exp(\beta(\{i,j\}) x(i) x(j)) & \beta(\{i,j\}) < \infty \\
  \ind(x(i) = x(j)) & \beta(\{i,j\}) = \infty,
   \end{array}              \right. 
\end{equation}
Hence $\beta$ controls the strength of interaction between spins at
endpoints of edges.  When $\beta$ is infinite, the attraction is infinite,
and endpoints must be given
the same value.  

An extension of the basic Ising model is to allow for an external
magnetic field.  In this case, the weight function acquires factors
based directly on the value of each node:
\[
w_\on{spins + field}(x) = 
  \left[ \prod_{\{i,j\} \in E} f(x(i),x(j)) \right]
  \left[ \prod_{i \in V} g(x(i)) \right],
\]
where $g$ is paramerized by a vector $B$ that controls the strength
of the magnetic field:
\[
g(x(i)) = \left\{ \begin{array}{ll} 
  \exp(\vecB(i) x(i)) & \on{for }\ \ |\vecB(i)| < \infty \\
  \ind(x(i) = 1) &  \on{for }\ \ \vecB(i) = \infty \\
  \ind(x(i) = -1) & \on{for }\ \ \vecB(i) = -\infty \\

   \end{array}              \right. 
\]

Say that the magnetic field is \emph{unidirectional} if $B \geq 0$
or $B \leq 0$.  The unidirectional model is easier than the model with
general magnetic field in the sense that it can be easily reduced 
to the case with no magnetic field as follows.
Suppose without loss of generality $B \geq 0$.  

If $B(v) < \infty$ for all $v$, then build a new
graph by creating
a dummy node $v_B$ and for all $i \in V$ adding an edge 
$\{i,v_B\}$ with edge weight $\beta(\{i,v_B\}) = (1/2) B(i).$  Note 
that any configuration $x$ satisfying $x(v_B) = 1$ has 
\[ w_{\textrm{spins + field}}(x(V)) = 
  w_{\textrm{spins only}}(x(V))
  \prod_{i \in V:B(i) < \infty} \exp((1/2) B(i)). \]
Since the weights differ by only a constant, the probability distributions
are the same.

Moreover, under $w_\textrm{spins only}$ the weight of $x$ and $-x$ are the
same, so taking a draw from $w_\on{spins}$ and choosing $x$ or $-x$
so that $v_B$ has value 1 is an easy way to draw from 
$w_\on{spins}$ conditioned on $v_B$ being 1.  Hence the unidirectional
magnetic field can be brought into the case of no magnetic field with
no loss of generality when $B(v) < \infty$ for all $v$.

When there exist one or more nodes with $B(v) = \infty$, there is
no need to create the dummy node $v_B$.  These nodes all must be spin
up, and so they can be merged, and then the merged node can be 
used in the same fashion as $v_B$ above.

\paragraph{Subgraphs model}
In a seminal paper Jerrum and Sinclair \cite{jerrums1993} 
showed how to approximate
$Z_\on{spins}(\vecBeta)$ for any graph and any $\vecBeta$ 
in polynomial time.  Their approach did not tackle the spins
model directly.
Instead, they developed results for 
a different formulation of the Ising model:  the subgraphs model.
This model is also a Markov random field, but assigns values to the edges
of the graph rather than the nodes.

Let $\subs = \{0,1\}^E$, and now a configuration $y \in \subs$
can be viewed as
a collection of edges of the graph, where $y(e) = 1$ indicates
that the edge is in the collection and otherwise it is not.  
Since each $y$ encodes a subset of edges, this was called the 
subgraphs world in~\cite{jerrums1993}.
Define $\lambda$ over the 
edges as follows:
\begin{equation}
\label{EQN:lambda_definition}
\lambda(\{i,j\}) = \tanh \beta(\{i,j\})
\end{equation}
where $\tanh \infty$ is taken to be $1$.
Then in the subgraphs model, the weight function is
\begin{equation}
w_\on{subs}(y) = \left[ \prod_{e:y(e) = 1} \lambda(e) \right] 
  \left[ \prod_i \ind(\on{deg}(i,y) \text{ is odd}) \right].
\end{equation}
The degree of a node $i$ in subgraph configuration $y$ is 
\begin{equation}
\degree(i,y) := \sum_{j \neq i} y(\{i,j\}),
\end{equation}
and $\ind(A)$ is the indicator function that has value 1 if the
Boolean expression $A$ is true, and is 0 otherwise.

As with the spins model, $\pi_\on{subs}(y) = w_\on{subs}(y) / Z_\on{subs}$,
where $Z_\on{subs}$ is the sum of the weights over all configurations, or
equivalently the normalizing constant 
that makes $\pisubs$ a probability
distribution.  
The subgraphs model is also known as the {\em high temperature expansion}, and
was introduced by van der Waerden~\cite{vanderwaerden1941}.  Like the
spins model, here the weight depends on the product of individual factors
that depend either on a single edge or only on the edges leaving a particular
node.

One note:  in the Jerrum and Sinclair~\cite{jerrums1993} paper 
the formulation of the subgraphs world 
included terms that allowed for a unidirectional
magnetic field.  As noted in the previous section, a unidirectional
field can be eliminated from the problem without loss of generality,
and so here the simpler form of the subgraphs model is used.

\paragraph{Random Cluster model}
A third approach to the Ising model was introduced by Fortuin and 
Kasteleyn~\cite{fortuink1972}.  
Unlike the spins and subgraphs views of the Ising model, 
the random cluster model
is decidedly nonlocal in its weight function.

Like the subgraphs model, the state space $\rc = \{0,1\}^E$ indexes
a collection of edges.  These edges partition the nodes $V$ into
the set of maximally connected components, known as \emph{clusters}.
To be precise, consider a configuration $z \in \rc$.
A collection of nodes $C$ is a cluster in $z$ if for all
$v$ and $v'$ in $C$, there is a path 
$\{v = v_1,v_2,v_3,\ldots,v_n = v'\}$
such that $\{v_{i},v_{i+1}\} \in E$ and $z(\{v_{i},v_{i+1}\}) = 1$ for all $i$.
Moreover, for all $v \in C$ and $v'' \notin C$,
either $\{v,v''\} \notin E$ or 
$z(\{v,v''\}) = 0.$  Let $\mathcal{C}$ denote the set of clusters.

The weight function for the random cluster model is:
\[w_\on{rc}(z) = \left[\prod_{e:z(e) = 1} p(e)\right] 
  \left[ \prod_{e:z(e) = 0} (1 - p(e)) \right] 
  2^{\#\mathcal{C}}.
\]
where
\begin{equation}
\label{EQN:p_definition}
p(e) = 1 - \exp(-2\beta(e)),
\end{equation}
and $\exp(-\infty)$ is taken to be 0.

The procedure for generating a spins draw from a random cluster draw
is as follows:  independently 
for each cluster, draw uniformly 
from $\{-1,1\}$ and assign all nodes in that cluster the randomly
chosen value.  (See~\cite{swendsenw1987} for further details.)  

The idea behind this is as follows.  The spins model with parameter
$\beta$ can be viewed
as a mixture of many different spins models, each with 
a different set of parameters.
Each component of the mixture
has a parameter vector whose $\beta$ values for every edge are either
$0$ or $\infty$, and so altogether there are $2^{\#E}$ different components
of the mixture.  The random cluster configuration indexes which 
component of the mixture to use:  $z(e) = 1$ indicates $\beta(e) = \infty$,
while $z(e) = 0$ indicates $\beta(e) = 0.$  Drawing a spins model with
such a simple parameter vector is easy:  all nodes connected by edges with
$\beta(e) = \infty$ must have the same spin, while the $\beta(e) = 0$ 
edges might as well be gone from the graph.

This idea of using an index from a mixture is 
developed formally in Section~\ref{SEC:theory}.

\paragraph{Relationship between spins and subgraphs}
The normalizng 
constants $Z_\on{spins}$ and $Z_\on{subs}$ are related by an easy to calculate
constant.  The following result goes back to~\cite{newellm1953}.

\begin{thm}
 \label{THM:relate}
\begin{equation}
\label{EQN:relate}
Z_\on{spins} = Z_\on{subs} 
   2^{\#V} \prod_{e} \cosh \beta(e).
\end{equation}
\end{thm}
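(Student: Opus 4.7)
The plan is to apply the classical high-temperature expansion to $w_{\text{spins}}$ and recognize the result as the subgraphs partition function times the stated constant. The engine of the proof is the fact that for $s \in \{-1,1\}$, one has the identity $\exp(\beta s) = \cosh \beta + s \sinh \beta = \cosh \beta \,(1 + s \tanh \beta)$. Applied with $s = x(i) x(j)$ and $\beta = \beta(e)$ (for finite $\beta(e)$), this factors each edge weight as $f(x(i),x(j)) = \cosh(\beta(e)) (1 + \lambda(e) x(i) x(j))$, separating a spin-independent $\cosh$ factor from a spin-polynomial of degree at most one per endpoint.

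Substituting this into $w_{\text{spins}}(x) = \prod_e f(x(i),x(j))$ and pulling out $\prod_e \cosh\beta(e)$ leaves $\prod_e (1 + \lambda(e) x(i) x(j))$. Distributing this product, each edge contributes either $1$ or $\lambda(e) x(i) x(j)$, and the choice is naturally indexed by a subset $y \in \{0,1\}^E$. Summing the resulting expression over $x$ and swapping sums gives
\[
Z_{\text{spins}} = \prod_e \cosh\beta(e) \sum_{y \in \{0,1\}^E} \Bigl(\prod_{e:\,y(e)=1} \lambda(e)\Bigr) \sum_{x \in \{-1,1\}^V} \prod_{e:\,y(e)=1} x(i) x(j).
\]
The spin monomial collapses to $\prod_{v \in V} x(v)^{\deg(v,y)}$, since each vertex contributes its spin once per incident chosen edge.

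The inner $x$-sum now factorizes across vertices, and for each $v$ the sum $\sum_{x(v) \in \{-1,1\}} x(v)^{\deg(v,y)}$ equals $2$ if $\deg(v,y)$ is even and $0$ otherwise. Hence the $x$-sum equals $2^{\#V}$ exactly when every vertex has the required degree parity in $y$ and vanishes otherwise, producing precisely the per-vertex indicator factors that define $w_{\text{subs}}$. Assembling the pieces identifies the remaining double sum with $Z_{\text{subs}}$ and yields the claimed identity.

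The one delicate point is the case $\beta(e) = \infty$, where $\cosh \beta(e)$ diverges and the spin factor degenerates to $\ind(x(i) = x(j))$. I would handle this by contracting every infinite-coupling edge into a super-vertex before running the argument above; on the subgraphs side, $\lambda(e) = 1$ for such edges, and a parity bookkeeping argument shows that both $Z_{\text{spins}}$ and $Z_{\text{subs}}$ transform consistently under the contraction, so the identity for the reduced graph lifts to the original one. This edge-contraction step is the only nonmechanical piece of the proof.
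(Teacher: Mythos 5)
Your proposal is correct on the main case and follows a genuinely different route than the paper's. You give the classical high-temperature expansion: use $\exp(\beta s) = \cosh\beta\,(1 + s\tanh\beta)$ for $s \in \{-1,1\}$ to factor each edge weight, expand the product over edges against a formal index $y \in \{0,1\}^E$, interchange the sums, and note that $\sum_{x(v)\in\{-1,1\}} x(v)^{\deg(v,y)}$ equals $2$ for even degree and $0$ for odd. That is precisely the direct analytical argument the paper declines to give, remarking that ``such a proof does not offer much probabilistic insight.'' The paper instead deduces (\ref{EQN:relate}) as a corollary of its simulation reductions: it composes the map from subgraphs draws to random cluster draws (Theorem~\ref{THM:subs2rc}, which already relates $Z_\on{subs}$ and $\Zrc$) with the standard Swendsen--Wang map from random clusters to spins, and reads the partition-function identity off the joint law. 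Your route is shorter and self-contained; the paper's route exhibits the identity as the shadow of a configuration-level coupling, which is the whole point of the paper. Incidentally, your expansion correctly produces \emph{even}-degree indicators; the paper's displayed definition of $w_\on{subs}$ writes ``odd,'' but its own proof of Theorem~\ref{THM:subs2rc} uses even degree, so your version is the right one.

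The one genuine flaw is your treatment of $\beta(e)=\infty$. Contracting such an edge leaves $Z_\on{spins}$ and $Z_\on{subs}$ unchanged (your parity bookkeeping is fine) and drops $\#V$ by one, so the contracted-graph identity reads $Z_\on{spins} = Z_\on{subs}\,2^{\#V-1}\prod_{e'\neq e}\cosh\beta(e')$; for this to lift to (\ref{EQN:relate}) on the original graph you would need $2\cosh\beta(e)=1$, whereas $\cosh\infty=\infty$. In fact (\ref{EQN:relate}) simply fails as a finite identity when some $\beta(e)=\infty$: the right-hand side diverges while $Z_\on{spins}$ stays finite because the paper defines the infinite-coupling edge factor as the bounded indicator $\ind(x(i)=x(j))$ rather than the divergent $\exp(\infty\cdot x(i)x(j))$. (The paper glosses over this too; its asserted identity $\ind(x(i)=x(j)) + (1-p(e))\ind(x(i)\neq x(j)) = f(x(i),x(j))\exp(-\beta(e))$ fails at $\beta(e)=\infty$, $x(i)=x(j)$ under its own convention $\exp(-\infty)=0$.) You should restrict the statement to finite $\beta$, or present the contracted-graph formula as the correct substitute in the degenerate case, rather than claim the original formula ``lifts.''
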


While this result is straightforward to show analytically, such a proof
does not offer much probabilistic 
insight into why this remarkable relationship is
true.  In~\cite{jerrums1993} it is noted that ``there is no direct 
correspondence between configurations in the two domains and the
subgraph configurations have no obvious physical significance''.  
In Section~\ref{SEC:discuss} a new proof of this result is presented
that is based on the simulation reductions presented here.

In~\cite{jerrums1993}, a Markov chain that moved among configurations
in the subgraphs world was developed.  More importantly, this chain was 
shown to be rapidly mixing for all graphs.  
This result could then be used with the idea
of selfreducibility~\cite{jerrumvv1986} to obtain a 
fully polynomial approximation scheme (fpras) for $Z_\on{subs}$, and hence
$Z_\on{spins}$ as well.

\section{Drawing from mixtures}
\label{SEC:theory}

The reductions between spins and subgraphs draws 
come from viewing a distribution $\pi$ as a 
convex mixture of
several distributions, so
$$\pi = \alpha_1 \pi_1 + \cdots \alpha_M \pi_M,$$
where $\alpha_1 + \cdots \alpha_M = 1$.

Typically the parameters for distributions of interest (like the Ising
model) form a convex set.  The mixture of $\pi$ uses distributions 
where the parameter values are at their extreme values.  Usually
these extreme values of parameters correspond to simpler distributions.
Many times, in reducing from $\pi$ to $\pi'$, 
$\pi' = \alpha_1 \pi'_1 + \cdots \alpha_M \pi'_M$ where the 
$\alpha_i$ are equal to the cofficients for $\pi$.  Therefore, the algorithm
proceeds as follows:
\renewcommand{\labelenumi}{(\arabic{enumi})}
\begin{enumerate}
\item{Draw a sample from $\pi$,}
\item{Choose which $\pi_i$ the sample came from,}
\item{Draw a sample from $\pi_i'$,}
\item{Return as a sample from $\pi'$.}
\end{enumerate}

This can be formulated as a special case of the auxilliary variable method.
Let $I$ be a random variable where $\prob(I = i) = \alpha_i$ (this extra
variable $I$ is the auxilliary variable.)  
If $X|I \sim \pi_I$, then $X \sim \pi$.  The algorithm takes advantage
of this in reverse:  Given $X \sim \pi$ begin by choosing $I|X$.
Then choose $X'|I$ from $\pi'$.  Then $X' \sim \pi'$ as desired.

Typically, direct computation of the $\alpha_i$ is difficult, 
which is why it is not possible to just draw a random $I$ directly.
However, drawing $I|X$ is possible, by proceeding through several
stages.

Begin by considering a mixture of two distributions, so
$\pi = \alpha_1 \pi_1 + \alpha_2 \pi_2$.  At this first stage, choose
$I(1) \in \{1,2\}$ given $X$.  Then break $\pi_{I(1)}$ into two 
distributions, and so on until the index $I = (I(1),I(2),\ldots,I(k))$ has
been chosen.  Typically the stages are set up so that $\pi'_I$ is easy
to sample from.  The following theorem provides the algorithm for choosing
$I(a)$ given $X$ from the two choices.

\begin{thm} 
\label{THM:mixture}
Consider three distributions $\pi$, $\pi_1$ and $\pi_2$, respectively
defined by unnormalized weight functions $w$, $w_1$ and $w_2$ with the
appropriate normalizing constants $Z = \sum_y w(y)$, $Z_1 = \sum_y w_1(y)$,
and $Z_2 = \sum_y w_2(y).$  Suppose that $w(x) = c_1 w_1(x) + c_2 w_2(x),$
where the $c_i$ are positive constants.

Then $\pi$ is a mixture of $\pi_1$ and $\pi_2$, so 
$\pi = \alpha_1 \pi_1 + \alpha_2 \pi_2$.  Let $X \sim \pi$.  
Let $I | X$ be a binary random variable that is 1 with probability
$c_1 w_1(X) / [c_1 w_1(X) + c_2 w_2(X)]$ and is $2$ otherwise.
Then $\prob(I = 1) = \alpha_1$, $\prob(I = 2) = \alpha_2$, and 
$[X | I] \sim \pi_I$.
\end{thm}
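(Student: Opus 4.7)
The plan is to verify the two conclusions by a direct calculation, after first identifying what the mixture weights $\alpha_1$ and $\alpha_2$ must be. Summing the hypothesis $w(x) = c_1 w_1(x) + c_2 w_2(x)$ over all $x$ gives $Z = c_1 Z_1 + c_2 Z_2$, so dividing by $Z$ yields
\[
\pi(x) = \frac{w(x)}{Z} = \frac{c_1 Z_1}{Z}\cdot\frac{w_1(x)}{Z_1} + \frac{c_2 Z_2}{Z}\cdot\frac{w_2(x)}{Z_2} = \alpha_1 \pi_1(x) + \alpha_2 \pi_2(x),
\]
with $\alpha_i := c_i Z_i / Z$. This identification of the $\alpha_i$ is the anchor for everything that follows.

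Next I would compute the marginal of $I$. By the definition of $I \mid X$,
\[
\prob(I = 1) = \sum_x \pi(x)\,\frac{c_1 w_1(x)}{c_1 w_1(x) + c_2 w_2(x)} = \sum_x \frac{w(x)}{Z}\cdot\frac{c_1 w_1(x)}{w(x)} = \frac{c_1}{Z}\sum_x w_1(x) = \frac{c_1 Z_1}{Z} = \alpha_1,
\]
where the key cancellation uses $c_1 w_1(x) + c_2 w_2(x) = w(x)$. The same computation gives $\prob(I=2) = \alpha_2$.

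For the conditional claim, I would compute the joint mass $\prob(X = x, I = 1) = \pi(x)\,\prob(I=1 \mid X=x) = c_1 w_1(x)/Z$, and then divide by $\prob(I=1) = c_1 Z_1/Z$ to obtain
\[
\prob(X = x \mid I = 1) = \frac{w_1(x)}{Z_1} = \pi_1(x),
\]
and symmetrically $\prob(X = x \mid I = 2) = \pi_2(x)$. This shows $[X \mid I] \sim \pi_I$, completing the proof.

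There is no real obstacle here: the theorem is essentially a restatement of Bayes' rule applied to the two-component mixture, and the only small point to be careful about is verifying that $w(x) = c_1 w_1(x) + c_2 w_2(x)$ forces the mixture weights to be exactly $c_i Z_i / Z$, which is what makes the conditional probabilities telescope cleanly.
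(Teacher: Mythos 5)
Your proposal is correct and follows essentially the same line of reasoning as the paper: identify $\alpha_i = c_i Z_i/Z$ by dividing the weight identity by $Z$, compute $\prob(I=i)$ by summing $\prob(I=i\mid X=x)\pi(x)$ and exploiting the cancellation $c_i w_i(x)/w(x)\cdot w(x)/Z = c_i w_i(x)/Z$, and then invert via Bayes' rule to get $\prob(X=x\mid I=i) = w_i(x)/Z_i$. The only cosmetic difference is that you explicitly note $Z = c_1 Z_1 + c_2 Z_2$, which the paper leaves implicit.
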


\begin{proof}
Start with $w(x) = c_1 w_1(x) + c_2 w_2(x)$ and divide through
by $Z$.  This yields $\pi(x)$ on the left hand side.
Multiply and divide term $i$ on the right hand side by $Z_i$ to obtain:
$$\pi(x) = c_1 \pi_1 (x) (Z_1 / Z) + c_2 \pi_2(x) (Z_2 / Z).$$
Hence $\alpha_1 = c_1 Z_1 / Z$ and $\alpha_2 = c_2 Z_2 / Z$.
Also,
$$\prob(I = i) = \sum_y \prob(I = i|X = y)\pi(y) = 
  \sum_y \frac{c_i w_i(y)}{w(y)} \frac{w(y)}{Z} = 
  \frac{c_i}{Z} \sum_{y} w_i(y) = \frac{c_i Z_i}{Z} = \alpha_i.$$

Turning it around:
$$\prob(X = y|I = i) = \prob(I = i|X = y) \pi(y) / \alpha_i 
 = \frac{c_i w_i(y)}{w(y)} \frac{w(y)}{Z} \frac{Z}{c_i Z_i} 
   = \frac{w_i(y)}{Z_i}.$$
So $[X|I = i]$ has distribution $\pi_i$, completing the proof.
\end{proof}

This theorem can be easily extended to an arbitrary number of distributions,
however, mixtures of $\pi_1$ and $\pi_2$ 
suffice for all the results in this paper.

\section{Subgraphs to random cluster}
\label{SEC:subs2rc}

In this section it is shown how to utilize a subgraphs draw together
with at most $\#E$ Bernoulli draws to generate
a draw from the random cluster model.  This can then be used to create
a spins draw if desired.

Let $\pisubs$ be the distribution parameterized by $\lambda$.
Consider any edge $e$ with $\lambda(e) \in (0,1)$.  Create a new
vector $\lambda_{e \rightarrow 1}$ 
that takes on value 1 on edge $e$, and matches
$\lambda$ at all other edges.
Similarly, let $\lambda_{e \rightarrow 0}$ equal $\lambda$ on all
edges but $e$, and let $\lambda_{e \rightarrow 0}(e) = 0.$

Then since $\lambda(e) \in (0,1)$:
\begin{equation}
\label{EQN:weight}
\wsubs(y ; \lambda) = \lambda(e) \wsubs(y ; \lame1) + 
  (1 - \lambda(e)) \wsubs(y ; \lame0),
\end{equation}

This has the exact form needed for Theorem~\ref{THM:mixture}.  The algorithm
is simple.  Begin with a draw $Y$ from $\pi$ parameterized with $\lambda$.
Calculate $\lambda(e)$ times the weight of $Y$ under 
$\lambda_{e \rightarrow 1}$, and 
$(1 - \lambda(e))$ times the weight of $Y$ under 
$\lambda_{e \rightarrow 0}$.  Draw
index $I$ from $\{1,0\}$ with probability proportional to these two numbers,
then set $\lambda(e)$ to $I$.

This means 
$\prob(I = 1) = \lambda(e) \wsubs(Y;\lame{1}) / \wsubs(Y;\lambda).$
When $Y(e) = 1$, this is just 1.  When $Y(e) = 0$, this becomes $\lambda(e)$.
Therefore
$$\prob(I = 1) = \lambda(e) + (1 - \lambda(e))Y(e).$$
After choosing the value of $I$, 
$Y$ given $I$ comes from the correct half of the mixture, and so the
process can be repeated again, until all of the entries in $\lambda$ are
either 0 or 1.  

\begin{algorithm}
\caption{{\tt Reduce edge weights}}
\label{ALG:reduce_edges}
\begin{algorithmic}[1]
\INPUT  parameter $\lambda$, $Y \sim \pisubs(\cdot;\lambda))$.
\OUTPUT new edge parameters $\lambda$
  \FOR {all edges $e$ with $\lambda(e) \in (0,1)$}
    \STATE {\bf draw} $\lambda(e) \leftarrow \bern 
      [\lambda(e) + (1 - \lambda(e))Y(e)]$
  \ENDFOR 
\end{algorithmic}
\end{algorithm}

Note that when $Y(e) = 1$, then 
$\lambda(e)
 \wsubs(Y;\lame{1}) / \wsubs(Y;\lambda) = 1,$ so the 
algorithm always sets $\lambda(e)$ to 1 in this case.  Only when 
$Y(e) = 0$ is there a choice with the possibility of $\lambda(e) = 0$.

At the end of the this algorithm, the draw $Y$ has been slotted into 
one of $2^{\#E}$ components of a mixture indexed by the new parameter
vector $\lambda$.  Each element of $\lambda$ is now either 0 or 1,
in the spins world this corresponds to $\beta$ being either 0 or 
$\infty$.  Hence the two nodes are either independent of each other or
forced to be the same, just as in the random cluster model.

This means, if the random output of Algorithm~\ref{ALG:reduce_edges} is called
$\Lambda$, then $\Lambda$ is a draw from 
the random cluster model of Fortuin and 
Kasteleyn~\cite{fortuink1972}.  This is stated precisely in the following
theorem.

\begin{thm}
\label{THM:subs2rc}
Let $Y \sim \pisubs(\cdot;\lambda)$.  Then
let $\Lambda$ be the output of Algorithm~\ref{ALG:reduce_edges} with
input $(\lambda,Y)$.  Let 
$p(e) = 1 - \exp(-2\beta(e)) = 2 \lambda(e)/(1 + \lambda(e))$.  Then 
$$\prob(\Lambda = z) = \frac{1}{\Zrc} 
 \left[ \prod_{e:z(e) = 1} p(e) \right]
 \left[ \prod_{e:z(e) = 0} 1 - p(e) \right]
 2^{\# {\cluster}(z)},$$
where
$$\Zrc = Z_\on{subs} 2^{\#V - \#E} \prod_e (1 + \exp(-2\beta(e)).$$
\end{thm}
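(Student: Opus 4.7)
The plan is to apply Theorem~\ref{THM:mixture} once per edge. Equation~\eqref{EQN:weight} already expresses $\wsubs(\cdot;\lambda)$ in the two-term mixture form the theorem requires, with $c_1 = \lambda(e)$, $w_1 = \wsubs(\cdot;\lame{1})$, $c_2 = 1-\lambda(e)$, and $w_2 = \wsubs(\cdot;\lame{0})$, and the Bernoulli drawn by Algorithm~\ref{ALG:reduce_edges} is exactly the resulting conditional law of the index $I$ given $Y$. I would fix an ordering $e_1,\ldots,e_k$ on the edges with $\lambda(e) \in (0,1)$; an induction on the number of edges processed shows that after the $j$th step the remaining $Y$, conditioned on $(\Lambda(e_1),\ldots,\Lambda(e_j))$, is distributed as $\pisubs(\cdot;\lambda^{(j)})$, where $\lambda^{(j)}$ has been updated at processed edges. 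Multiplying conditional probabilities, the successive partition-function ratios telescope and give
\[
\prob(\Lambda = z) \;=\; \left[\prod_e \lambda(e)^{z(e)}(1-\lambda(e))^{1-z(e)}\right] \cdot \frac{Z_{\on{subs}}(z)}{Z_{\on{subs}}(\lambda)}.
\]

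The crux of the argument is computing $Z_{\on{subs}}(z)$ when each $z(e) \in \{0,1\}$. Edges with $z(e) = 0$ kill any configuration that contains them, so the support collapses to subgraphs of $G_z := (V,\{e : z(e)=1\})$, and on this support every surviving configuration has weight either $0$ or $1$ according to the vertex-degree-parity indicator. The subsets of edges of $G_z$ that meet the parity condition at every vertex form (a coset of) the cycle space of $G_z$, which has dimension $m_z - \#V + \#\cluster(z)$, where $m_z = \sum_e z(e)$ and clusters are counted with isolated vertices as singletons. Hence $Z_{\on{subs}}(z) = 2^{m_z - \#V + \#\cluster(z)}$. I expect this to be the main obstacle, since it requires recognizing the degree constraints as linear equations modulo $2$ and invoking the classical cycle-space count.

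It remains to convert from $\lambda$ to $p$. Using $\lambda(e) = \tanh\beta(e)$, one verifies $p(e) = 2\lambda(e)/(1+\lambda(e))$ and $1-p(e) = (1-\lambda(e))/(1+\lambda(e))$, so
\[
\lambda(e)^{z(e)}(1-\lambda(e))^{1-z(e)} = p(e)^{z(e)}(1-p(e))^{1-z(e)} \cdot \frac{1+\lambda(e)}{2^{z(e)}}.
\]
Substituting this together with $Z_{\on{subs}}(z) = 2^{m_z - \#V + \#\cluster(z)}$ into the expression for $\prob(\Lambda = z)$, the factors $2^{z(e)}$ multiply to $2^{m_z}$ and cancel with the $2^{m_z}$ from $Z_{\on{subs}}(z)$, leaving the claimed random cluster weight times a $z$-independent constant. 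Normalization forces that constant to equal $1/\Zrc$, and rewriting $1+\exp(-2\beta(e)) = 2/(1+\lambda(e))$ converts the resulting expression to the stated formula $\Zrc = Z_{\on{subs}} \, 2^{\#V - \#E}\prod_e(1+\exp(-2\beta(e)))$.
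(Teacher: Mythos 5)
Your proposal is correct, and it rests on the same central combinatorial fact as the paper's proof: the number of subsets $y \leq z$ with even degree at every vertex equals $2^{m_z - \#V + \#\cluster(z)}$, established here (as in the paper) via the cycle space / spanning-forest argument. What differs is the packaging. The paper computes the joint law $\prob(Y=y,\Lambda=z)$ directly, observes that it is constant over admissible $y$, and sums; you instead apply Theorem~\ref{THM:mixture} once per edge and telescope the partition-function ratios, arriving at
$\prob(\Lambda = z) = \bigl[\prod_e \lambda(e)^{z(e)}(1-\lambda(e))^{1-z(e)}\bigr]\, Z_{\on{subs}}(z)/Z_{\on{subs}}(\lambda)$
and then identifying $Z_{\on{subs}}(z)$ as the cycle-space count. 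These are the same numbers derived two ways; your route has the small advantage of making explicit that the whole reduction is literally an iterated use of Theorem~\ref{THM:mixture}, while the paper's route spells out the conditional $\prob(\Lambda = z \mid Y = y)$ concretely. Two items worth making explicit in a write-up: (i) the indicator in the printed weight $w_\on{subs}$ reads ``odd'' but the model (and the paper's own proof) requires \emph{even} degree --- you used the correct condition; (ii) for your induction to be clean you should note that at each stage the factorization $w_\on{subs}(\cdot;\lambda^{(j)}) = \lambda(e_{j+1})\,w_\on{subs}(\cdot;\lambda^{(j)}_{e_{j+1}\to 1}) + (1-\lambda(e_{j+1}))\,w_\on{subs}(\cdot;\lambda^{(j)}_{e_{j+1}\to 0})$ still holds because $\lambda(e_{j+1})$ is unchanged by the earlier updates, which is what keeps the $c_i$ equal to $\lambda(e_{j+1})$ and $1-\lambda(e_{j+1})$ throughout the telescoping.
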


\begin{proof}  Fix $z \in \{0,1\}^E$.  Consider a particular cluster in $z$.
Suppose that $Y = y$, and consider the chance that $\Lambda = z$ given 
$Y = y$.  Edges with $y(e) = 1$ always have $\Lambda(e) = 1$, so if
$z(e) = 0$ then $y(e)$ must be zero to have positive chance of $\Lambda = z$.
That is, $y \leq z$.

Now consider how a random draw could result in $\Lambda = z$ given $Y = y$.
For each edge with $y(e) = 0$ and $z(e) = 1$, there is a $\lambda(e)$ chance
of making this choice.  Hence the chance these edges agree is 
$\prod_{e:y(e) = 0,z(e) = 1} \lambda(e).$  For each edge $e$ with $y(e) = 1$
and $z(e) = 1$, the chance these edges agree is 1.  Finally, if 
$y(e) = 0$ and $z(e) = 0$, the chances that these agree is 
$\prod_{e:y(e) = 0,z(e) = 0} 1 - \lambda(e).$  Therefore,
\[
\prob(\Lambda = z|Y = y) = 
  \left[ \prod_{e:y(e) = 0,z(e) = 1} \lambda(e) \right]
  \left[ \prod_{e:y(e) = 0,z(e) = 0} 1 - \lambda(e) \right] \\
\]

Now consider the probability that
$Y = y$.  Since $Y$ is a subgraphs draw, 
this probability
is 0 unless all the nodes under $y$ have even degree, in which case the
probability of choosing $y$ is 
$Z_\on{subs}^{-1} \prod_{y(e) = 1} \lambda(e).$  This equals
$Z_\on{subs}^{-1} \prod_{y(e) = 1,z(e) = 1}$ since 
$y(e) = 1$ implies $z(e) = 1$ as well.  Therefore for all 
$y$ where each node has even degree
\begin{eqnarray*}
\prob(Y = y,\Lambda = z) & = &  Z_\on{subs}^{-1}
  \left[ \prod_{e:y(e) = 1,z(e) = 1} \lambda(e) \right] 
  \left[ \prod_{e:y(e) = 0,z(e) = 1} \lambda(e) \right]
  \left[ \prod_{e:y(e) = 0,z(e) = 0} 1 - \lambda(e) \right] \\
 & = & Z_\on{subs}^{-1} \left[ \prod_{e:z(e) = 1} \lambda(e) \right]
       \left[ \prod_{e:z(e) = 0} 1 - \lambda(e) \right] \\
\end{eqnarray*}
To find $\prob(\Lambda = z)$ all that remains is to sum over all
$y$ where each node has even degree.  Since 
$\prob(Y = y,\Lambda = z)$
is independent of $y$,
\begin{equation}
\label{EQN:inproof}
\prob(\Lambda = z) = \#\{y:y \leq z,\degree(i,y) 
 \text{ even } \forall i\} 
Z_\on{subs}^{-1} \left[ \prod_{e:z(e) = 1} \lambda(e) \right]
       \left[ \prod_{e:z(e) = 0} 1 - \lambda(e) \right] .
\end{equation}

So now consider how many $y \leq z$ states have even degree everywhere.
Consider a particular cluster $C$ in $z$, and let $C_E$ denote the set
of edges between nodes
in $C$.
Since the nodes are connected,
there exists a tree $T$ with edges $T_E \subseteq C_E$. 
Let $A = C_E \setminus T_E$ be those edges in the cluster
that are not part of the tree.

The key fact about $A$ is that
for any value of $y(A)$ there exists exactly one value of $y(T_E)$ that
ensures that $y(T_E \sqcup A)$ has even degree at each node.
See Figure~\ref{FIG:example} for an example.

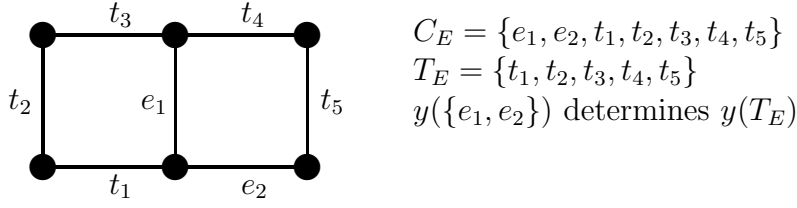
\begin{figure}[ht]
\begin{picture}(300,100)(-40,0)
  \put(50,20){\circle*{10}}
  \put(100,20){\circle*{10}}
  \put(150,20){\circle*{10}}
  \put(50,70){\circle*{10}}
  \put(100,70){\circle*{10}}
  \put(150,70){\circle*{10}}
  \thicklines
  \put(50,20){\line(0,1){50}}
  \put(75,10){$t_1$}
  \put(100,20){\line(0,1){50}}
  \put(125,10){$e_2$}
  \put(150,20){\line(0,1){50}}
  \put(155,42){$t_5$}
  \put(50,20){\line(1,0){50}}
  \put(37,42){$t_2$}
  \put(100,20){\line(1,0){50}}
  \put(87,42){$e_1$}
  \put(50,70){\line(1,0){50}}
  \put(75,75){$t_3$}
  \put(100,70){\line(1,0){50}}
  \put(125,75){$t_4$}
  \put(190,68){$C_E = \{e_1,e_2,t_1,t_2,t_3,t_4,t_5\}$}
  \put(190,53){$T_E = \{t_1,t_2,t_3,t_4,t_5\}$}
  \put(190,38){$y(\{e_1,e_2\})$ determines $y(T_E)$}
\end{picture}
\caption{Example of cluster with six nodes}
\label{FIG:example}
\end{figure}

To show this,
fix $y(A)$.
Start with
a leaf $i$ of the tree $T$.  
There is exactly one edge in $T_E$ adjacent to $i$ with
$z(e) = 1$, so there is precisely one way to choose $y(e)$ in order to 
maintain that the degree of $i$ must be even under $y(e)$.  Now consider
the remaining edges in $T_E$.  The nodes connected to these
edges forms a smaller tree, so again a leaf
exists, and again the there is exactly one way to choose $x$ on this edge
to maintain the even degree requirement.  
After $\#T$ steps the unique value for $y(T)$ that preserves the even
degree at each node will have been found.

So given $y(A)$, there is exactly one choice of $y$ over the edges of the
cluster where all edges of the cluster receive even degree.
There are $2^{\#A}$ different possible values of 
$y(A)$, and so there are $2^{\#A}$ different choices of $y$ over
the cluster.  The number of edges in $A$ is the number of edges in the cluster
minus the number of edges in the tree, so for a cluster $C \in {\cal C}$,
this is
\[
\#A = \left[ \sum_{e \in C_E} z(e) \right] 
   - [\# C - 1].
\]

The choice of $y(C_E)$ is independent for each cluster $C \in \cluster$, and
so the total number of $x$ such that $x \leq z$ and each node of 
$x$ has even degree is the product
$$\prod_{C \in \cluster} 2^{\sum_{e \in C} z(e) - 
  [\# C - 1]}
  = 2^{\left[\sum_{e \in C} z(e)\right] - \#V + \#\cluster(z)}.$$
Combining this with (\ref{EQN:inproof}) yields
$$\prob(\Lambda = z) = Z_\on{subs}^{-1} 2^{-\#V + \#\cluster(z) } 
                       \left[ \prod_{e:z(e) = 1} 2 \lambda(e) \right] 
                       \left[ \prod_{e:z(e) = 0} 1- \lambda(e) \right],$$
or equivalently,
$$\prob(\Lambda = z) = Z_\on{subs}^{-1} 2^{-\#V}  
  \left[ \prod_e 1 + \lambda(e) \right]
  2^{\#\cluster(z) } 
                       \left[ \prod_{e:z(e) = 1} \frac{2 \lambda(e)}
 {1 + \lambda(e)} \right] 
                       \left[ \prod_{e:z(e) = 0} \frac{1 - \lambda(e)}
 {1 + \lambda(e)} \right].$$
It is easy to verify from (\ref{EQN:lambda_definition}) 
and (\ref{EQN:p_definition}) 
that $2 \lambda(e) / (1 + \lambda(e)) = p(e)$
and $(1 - \lambda(e)) / (1 + \lambda(e)) = 1 - p(e)$, 
which shows that the two rightmost sets of brackets factors equal
$w_\on{rc}(z).$  Therefore 
$Z^{-1}_\on{subs} 2^{-\#V} \prod_e (1 + \lambda(e))  
  = Z^{-1}_\on{rc}.$  
Using $(1 + \lambda(e))^{-1} = (1/2)(1 + \exp(-2\beta(e)))$
completes the proof.
\end{proof}

\section{Random cluster to subgraphs}
\label{SEC:rc2subs}

This section shows how to take a random cluster draw, and together
with a number of Bernoullis equal to $\#E - \#V$, create a draw
from the subgraphs model.
Swendsen-Wang~\cite{swendsenw1987} employed the relationship between
random clusters and spins to devise a Markov chain that is very fast
for some values of the parameters.  From a spins draw, generate a 
random cluster index, then from the index, generate a new spins draw.

With the ability to move between a random cluster draw and a subgraphs
draw, a new Swendsen-Wang style Markov chain becomes available for use:
from the random cluster model, draw a subgraphs model, then from the 
subgraphs model, draw a random cluster model.
Algorithm~\ref{ALG:randomcluster2subgraphs} 
shows how to move from a random cluster model to a subgraphs model.

\begin{algorithm}
\caption{{\tt Random Cluster to subgraphs}}
\label{ALG:randomcluster2subgraphs}
\begin{algorithmic}[1]
\INPUT  parameter $p(e)$, $Z \sim \pirc(\cdot;p)$
\OUTPUT $Y \sim \pisubs$
  \STATE $F_E \leftarrow \textrm{ a maximal forest using edges } e 
    \textrm{ with } Z(e) = 1.$
  \STATE $D \leftarrow \emptyset$ 
  \FOR {all edges $e \in E \setminus F_E$}
    \STATE {\bf draw} $Y(e) \leftarrow \bern(Z(e)/2)$, 
                      $D \leftarrow D \cup \{e\}$
  \ENDFOR 
  \WHILE{$F_E \neq \emptyset$}
    \STATE $i \leftarrow \textrm{any leaf in the forest } F_E$ 
    \STATE $j \leftarrow \textrm{the node such that } \{i,j\} \in F_E$
    \STATE $Y(\{i,j\}) \leftarrow \ind(\degree(i,Y(D)) \textrm{ is odd })$
    \STATE $D \leftarrow D \cup \{i,j\}$, 
           $F_E \leftarrow F_E \setminus \{i,j\}$
  \ENDWHILE
\end{algorithmic}
\end{algorithm}

\begin{thm} Algorithm~\ref{ALG:randomcluster2subgraphs} returns a draw
from the subgraphs distribution where parameters $\lambda$, $\beta$ and $p$
are related by equations (\ref{EQN:lambda_definition}) and 
(\ref{EQN:p_definition}).
\end{thm}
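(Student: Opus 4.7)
The plan is to invoke Theorem~\ref{THM:mixture} in reverse, reusing the mixture decomposition that drove Section~\ref{SEC:subs2rc}. Iterating equation~(\ref{EQN:weight}) across all edges expresses
\[
\pisubs(\cdot;\lambda) = \sum_{z \in \{0,1\}^E} \alpha_z\, \pisubs(\cdot;\lambda_z),
\]
where $\lambda_z(e) = z(e)$. By Theorem~\ref{THM:mixture}, the mixing coefficients $\alpha_z$ coincide with the marginal distribution of the output $\Lambda$ of Algorithm~\ref{ALG:reduce_edges}, which by Theorem~\ref{THM:subs2rc} equals $\pirc$. So by the law of total probability it suffices to show that, given $Z = z$, the output $Y$ of Algorithm~\ref{ALG:randomcluster2subgraphs} has distribution $\pisubs(\cdot;\lambda_z)$.

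First I would make that target explicit. Since each $\lambda_z(e) \in \{0,1\}$, the weight of $y$ under $\pisubs(\cdot;\lambda_z)$ is either $0$ or $1$; the positive-weight configurations are exactly those $y$ with $y \leq z$ and even degree at every vertex. So $\pisubs(\cdot;\lambda_z)$ is uniform on this finite support, and the task reduces to showing that the algorithm samples that support uniformly.

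Next I would trace through the algorithm. Because $F_E$ is a maximal forest built from $Z = 1$ edges, its components are spanning trees of the clusters of $z$. Non-forest edges with $Z(e) = 0$ are set to $0$ deterministically by $\bern(0)$; non-forest edges with $Z(e) = 1$ are exactly the non-tree edges $A$ from the proof of Theorem~\ref{THM:subs2rc}, and they receive independent fair coin flips. The while loop then assigns each forest edge the unique value needed for its leaf endpoint to have even degree, which is precisely the leaf-pruning completion analyzed in that earlier proof. Hence the $\bern(1/2)$ choices on $A$ are in bijection with the elements of the support, and the uniform law on $\{0,1\}^A$ pushes forward to the uniform law on the support.

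The main obstacle, though mild, is checking that the root of each leaf-pruning tree also acquires even degree even though it is never explicitly forced. This follows from the handshaking lemma applied within each cluster: the sum of vertex degrees in the cluster equals twice the number of selected edges and is therefore even, so once every vertex but one has been forced to even degree, the remaining vertex automatically has even degree too. Combining this with the previous two observations yields the required conditional distribution and, via the mixture identity, the theorem.
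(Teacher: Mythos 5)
Your proof is correct and follows the paper's overall framework --- reversing the mixture decomposition so that sampling $Z \sim \pirc$ and then $Y \mid Z \sim \pisubs(\cdot;\lambda_Z)$ produces $Y \sim \pisubs(\cdot;\lambda)$ --- but you verify the conditional law by a genuinely different route. The paper works edge by edge: for each non-forest edge $e$ with $Z(e)=1$ it takes the unique cycle through $e$ in the maximal forest and flips $y$ along that cycle, a parity-preserving involution on the support showing $\prob(Y(e)=1) = 1/2$ conditionally on all other non-forest edges. You instead note that because each $\lambda_z(e) \in \{0,1\}$, the conditional target $\pisubs(\cdot;\lambda_z)$ is uniform on $\{y \le z : \degree(i,y)\ \text{even for all}\ i\}$, and that the algorithm's independent fair coins on the non-forest $Z=1$ edges are in bijection with this support via the leaf-pruning completion, essentially re-using the counting lemma already established inside the proof of Theorem~\ref{THM:subs2rc}. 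Both arguments are sound; yours is somewhat more economical because it leans on existing work rather than constructing a fresh cycle bijection. You also explicitly resolve a point the paper leaves implicit: it is not obvious from the peeling description alone that the \emph{last} vertex of each tree ends up with even degree, and your handshaking-lemma argument (degree sum within a cluster is even, so once all other vertices are forced even the final one is automatically even) closes that loop. Neither the paper's proof of this theorem nor its proof of Theorem~\ref{THM:subs2rc} spells this out, so your addition is a genuine tightening of the argument.
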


\begin{proof} 
It was shown in 
Theorem~\ref{THM:subs2rc} that if $Y \sim \pi_\on{subs}(\cdot;\lambda)$,
and then $Z|Y'$ is drawn as an index for the component of the mixture,
then $Z \sim \pi_\on{rc}(\cdot;p)$ where $p$ and $\lambda$ are
related by (\ref{EQN:lambda_definition}) and 
(\ref{EQN:p_definition}).  So as in the discussion in Section~\ref{SEC:theory},
if $Z$ is drawn first from $\pi_\on{rc}(\cdot;p)$, and then 
$Y|Z \sim \pi_{\on{subs}}(\cdot;Z)$, then 
$Y \sim \pi_\on{subs}(\cdot;\lambda).$

Let $F_E$ be a maximal spanning forest in the graph using
edges with $Z(e) = 1$.  
Any edge not in $F_E$ with $Z(e) = 0$ must have $Y(e) = 0$ in order for the 
configuration to have positive weight.  So 
let $e$ be an edge such that $Z(e) = 1$ in $E \setminus F_E$.

Since $F_E$ is a maximal forest, the endpoints of $e$ must be connected
by edges in $F_E$.  Call $e$ plus these connecting edges $L$.  
Let $x$ be any configuration with $x(e) = 0$, and 
$f(x)$ be a new configuration constructed as follows.  For all 
$e' \in L$, let $f(e') = 1 - x(e').$  For all $e' \notin L$, let
$f(e') = x(e').$  This `flips' the value of $x(e')$ along 
all the edges of the cycle $e \cup L$.

Since the edges are flipped along a cycle, the parity of each node
is unchanged.
This map $f$ is 1-1 and onto, and so the partition function conditioned
on $y(e) = 0$ equals that conditioned on $y(e) = 1$.  In other words,
there is a exactly a 1/2 chance that $Y(e) = 1$ for a draw $Y \sim \pisubs.$

This holds even when conditioned on the values of all other edges in
$E \setminus (F_E \cup \{e\}),$ since only edges in $F_E$ plus $e$ 
were used to construct $L$.  Combining the $Z(e) = 0$ and $Z(e) = 1$
cases, the result is that $Y(e) \sim \bern(Z(e) / 2)$, even when conditioned
on the value of $Y(e')$ for all other edges $e' \neq e$ not in $F_E$.
Hence lines 3 through 5 of the algorithm
are correct.

Now consider edge $e = \{i,j\}$ where $i$ is a leaf of $F_E$.  Then
$Y(e)$ must be chosen in such a way that the degree of $i$ in $Y$
is even.  Line 9 accomplishes this task.  This edge has been assigned
a value, and can now be removed from $F_E$.  As long as $F_E$ is 
nonempty, there will be at least two leaves left, so the process can
be continued until the edges of $F_E$ are all assigned values by $Y$.
\end{proof}

Note that the maximal forest $F_E$ can be constructed using either breadth
first or depth first search.  Either way, reversing the order in which 
nodes were added to the forest provides a way of recovering the leaves
one by one with an overall running time that is linear in the size
of the graph.

\section{Further notes}
\label{SEC:discuss}

\subsection{Proof of Theorem~\ref{THM:relate}}

The reduction from subgraphs to random clusters as shown in 
Theorem~\ref{THM:subs2rc} together with the reduction from random
clusters to spins provides an immediate proof of Theorem~\ref{THM:relate}.

\begin{proof}[Proof of Theorem~\ref{THM:relate}]
Let $Z$ be a random cluster draw, and $X$ the spins draw that is obtained
by uniformly at random choosing spin up or down for each cluster in $Z$.
For a spins configuration $x$, let $S(x)$ denote the set of random cluster
configurations that are consistent with $x$, so $z \in S(x)$ means for all
$\{i,j\} \in E$, $x(i) = x(j)$ implies $z(\{i,j\}) = 1$.
Then
\begin{eqnarray*}
\prob(X = x) & = & \sum_{z \in S(x)} \prob(X = x|Z = z)\prob(Z = z) \\
  & = & \sum_{z \in S(x)} (1/2)^{\# \cluster(z)} 
          \left[\prod_{e:z(e) = 1} p(e)\right] 
  \left[ \prod_{e:z(e) = 0} (1 - p(e)) \right] 
  2^{\#\cluster(z)} Z_\on{rc}^{-1} \\
 & = & Z_\on{rc}^{-1} \prod_e \sum_{z(e) \leq \ind(x(i) = x(j))} p(e) z(e) 
         + (1 - p(e))(1 - z(e)) \\
 & = & Z_\on{rc}^{-1} \prod_e 1 \cdot \ind(x(i) = x(j)) 
         + (1 - p(e)) \cdot \ind(x(i) \neq x(j)).
\end{eqnarray*}
Now $1 - p(e) = 0$ if $\beta(e) = \infty$.  When $\beta(e) < \infty$,
$1 - p(e) = \exp(-2\beta(e))$.  
This means that for all $\beta(e)$:
\[
\ind(x(i) = x(j)) + (1 - p(e)) \ind(x(i) \neq x(j)) = 
 f(x(i),x(j)) \exp(-\beta(e)) 
\]
where $\exp(-\infty)$ is taken to be 0, and $f$ is as in 
(\ref{EQN:f_definition}).  Hence
\begin{eqnarray*}
\prob(X = x) & = & 
 Z_{\on{rc}}^{-1} \prod_{e} [f(x(i),x(j)) \exp(-\beta(e))],
\end{eqnarray*}
which means that 
$Z_\on{spins} = Z_\on{rc} \prod_e \exp(\beta(e)).$

Combining with Theorem~\ref{THM:subs2rc} yields
\begin{eqnarray*}
Z_\on{spins} & = & Z_\on{subs} 2^{\#V - \#E} \left[ 
  \prod_e (1 + \exp(-2\beta(e))) \right]
  \left[ \prod_e \exp(\beta(e)) \right] \\
 & = & Z_\on{subs} 2^{\#V} 
  \prod_e \cosh \beta(e),
\end{eqnarray*}
as desired.
\end{proof}

\subsection{Perfect simulation of subgraphs}
\label{SBS:perfectsim}

One of the original applications of the coupling from the past 
algorithm~\cite{proppw1996} was to generate samples perfectly from
the random cluster model.  Given the reduction of the previous section,
this immediately gives the first perfect simulation method for the
subgraphs world, which could prove useful in studying the model.

\bibliographystyle{plain}

\end{document}